\newtheorem{thm}[equation]{Theorem}
\numberwithin{equation}{section}
\newtheorem{conj}[equation]{Conjecture}
\newtheorem{defin}[equation]{Definition}
\newtheorem{prop}[equation]{Proposition}
\begin{document}
\raggedbottom \voffset=-.7truein \hoffset=0truein \vsize=8truein
\hsize=6truein \textheight=8truein \textwidth=6truein
\baselineskip=18truept
\def\mapright#1{\ \smash{\mathop{\longrightarrow}\limits^{#1}}\ }
\def\ss{\smallskip}
\def\ssum{\sum\limits}
\def\dsum{{\displaystyle{\sum}}}
\def\la{\langle}
\def\ra{\rangle}
\def\on{\operatorname}
\def\o{\on{od}}
\def\lg{\on{lg}}
\def\a{\alpha}
\def\bz{{\Bbb Z}}
\def\eps{\epsilon}
\def\br{{\bold R}}
\def\bc{{\bold C}}
\def\bN{{\bold N}}
\def\nut{\widetilde{\nu}}
\def\tfrac{\textstyle\frac}
\def\product{\prod}
\def\b{\beta}
\def\G{\Gamma}
\def\g{\gamma}
\def\zt{{\Bbb Z}_2}
\def\zth{{\bold Z}_2^\wedge}
\def\bs{{\bold s}}
\def\bg{{\bold g}}
\def\bof{{\bold f}}
\def\bq{{\bold Q}}
\def\be{{\bold e}}
\def\line{\rule{.6in}{.6pt}}
\def\xb{{\overline x}}
\def\xbar{{\overline x}}
\def\ybar{{\overline y}}
\def\zbar{{\overline z}}
\def\ebar{{\overline \be}}
\def\nbar{{\overline n}}
\def\fbar{{\overline f}}
\def\Ubar{{\overline U}}
\def\et{{\widetilde e}}
\def\ni{\noindent}
\def\ms{\medskip}
\def\ahat{{\hat a}}
\def\bhat{{\hat b}}
\def\chat{{\hat c}}
\def\nbar{{\overline{n}}}
\def\minp{\min\nolimits'}
\def\N{{\Bbb N}}
\def\Z{{\Bbb Z}}
\def\Q{{\Bbb Q}}
\def\R{{\Bbb R}}
\def\C{{\Bbb C}}
\def\el{\ell}
\def\mo{\on{mod}}
\def\dstyle{\displaystyle}
\def\ds{\dstyle}
\def\Remark{\noindent{\it  Remark}}
\title
{For which $2$-adic integers $x$ can $\dstyle\sum_k\tbinom xk^{-1}$ be defined?}
\author{Donald M. Davis}
\address{Department of Mathematics, Lehigh University\\Bethlehem, PA 18015, USA}
\email{dmd1@lehigh.edu}
\date{January 11, 2013}

\keywords{binomial coefficients, 2-adic integers}
\thanks {2000 {\it Mathematics Subject Classification}:
05A10, 11B65, 11D88.}

\maketitle
\begin{abstract} Let $f(n)=\sum_k\binom nk^{-1}$. In a previous paper, we defined for a $p$-adic integer $x$ that $f(x)$ is $p$-{\it definable} if $\lim f(x_j)$ exists in $\Q_p$,
where $x_j$ denotes the mod $p^j$ reduction of $x$. We proved that if $p$ is  odd, then
$-1$ is the only element of $\Z_p-\N$ for which $f(x)$ is $p$-definable. For $p=2$, we
proved that if the 1's in the binary expansion of $x$ are eventually extraordinarily sparse, then $f(x)$ is 2-definable.
Here we present some conjectures that $f(x)$ is 2-definable for many more 2-adic integers. We discuss the extent to which we can prove these conjectures.
\end{abstract}
\section{Statement of conjectures and their consequences}\label{intro}
Let $\N\subset\Z_p\subset\Q_p$ denote the natural numbers (including 0), $p$-adic integers, and $p$-adic numbers, respectively, with metric $d_p(x,y)=p^{-\nu_p(x-y)}$.
Here and throughout, $\nu_p(-)$ denotes the exponent of $p$ in a rational number.
Let $f:\N\to\Q_p$ be defined by
$$f(n)=\sum_{k=0}^n\tbinom nk^{-1}.$$
In \cite{D}, we made the following definition.
\begin{defin} Let $x\in\Z_p$, and let $x_j$ denote the mod $p^{j}$ reduction of $x$. Then $f(x)$ is $p$-definable if $\la f(x_j)\ra$ is a Cauchy sequence in $\Q_p$.\end{defin}
\noindent Then $f(x)$ could be defined to be the limit in $\Q_p$ of this Cauchy sequence.

We proved in \cite{D} that if $p$ is an odd prime, then $f(x)$ is $p$-definable if and only if $x=-1$ or $x\in\N$. (Actually, $p$ was required to satisfy a technical condition
which is satisfied by all primes less than $10^8$, and for which there are no primes which are known not to satisfy it.) We also proved that if $x=\sum 2^{e_i}$ with $e_i<e_{i+1}$, then $f(x)$  is 2-definable
if, roughly, $i+1>2^i$  for all sufficiently large $i$. The 1's in the binary expansion of such an $x$ are eventually extraordinarily sparse.
Here we discuss our attempts to prove that $f(x)$ is 2-definable for many more 2-adic integers.

Let $\a(n)$ denote the number of 1's in the binary expansion of $n$, $\lg(-)=[\log_2(-)]$, and $\nu(-)=\nu_2(-)$.
Our strongest conjecture is
\begin{conj}\label{conj1} If $0\le k<2^e$, then
$$\nu(f(2^e+k)-f(k))\ge e-2\a(k)-2.$$\end{conj}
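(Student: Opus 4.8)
The plan is to bypass the sum $\sum_k\binom nk^{-1}$ and instead work from the classical closed form
$$f(n)=\frac{n+1}{2^{n+1}}\sum_{j=1}^{n+1}\frac{2^j}{j},$$
which follows by an easy induction from the recursion $f(n)=\frac{n+1}{2n}f(n-1)+1$. The crucial $2$-adic input is that $\sum_{j\ge1}2^j/j$ converges in $\Q_2$ (its $j$-th term has valuation $j-\nu(j)\to\infty$) and in fact equals $0$: it is $-\log(-1)$ for the $2$-adic logarithm, and $2\log(-1)=\log 1=0$. Hence $\sum_{j=1}^{m}2^j/j=-\sum_{j>m}2^j/j$, and substituting $j=n+1+i$ yields the rapidly convergent expression
$$f(n)=-(n+1)\sum_{i\ge1}\frac{2^i}{n+1+i}.$$

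With this in hand the difference collapses. Writing $m=k+1$ and subtracting the two series termwise, the bracket $\dfrac{2^e+m}{2^e+m+i}-\dfrac{m}{m+i}$ has numerator exactly $2^e i$, so
$$f(2^e+k)-f(k)=-2^e\sum_{i\ge1}\frac{i\,2^i}{(2^e+k+1+i)(k+1+i)}.$$
The asserted inequality is therefore equivalent to $\nu(T)\ge-2\alpha(k)-2$, where $T$ denotes the remaining series. Since $\nu$ of a convergent sum is at least the minimum of the valuations of its terms, I would aim for the sharper termwise bound. For indices with $k+1+i\ge 2^e$ the factor $2^i$ forces enormous valuation, so those are harmless; for $1\le i$ with $k+1+i<2^e$ one has $\nu(2^e+k+1+i)=\nu(k+1+i)$, and the $i$-th term has valuation $\nu(i)+i-2\nu(k+1+i)$.

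Thus everything reduces to the elementary-looking claim that $\nu(i)+i-2\nu(k+1+i)\ge-2\alpha(k)-2$ for all $i\ge1$. The structural fact driving this is $\nu(k+1)\le\alpha(k)$: if $\nu(k+1)=w$ then $k$ ends in $w$ binary $1$'s, so $\alpha(k)\ge w$. When $\nu(i)\ne\nu(k+1)$ one has $\nu(k+1+i)=\min(\nu(i),\nu(k+1))$, and the bound follows at once from this fact together with $\nu(i)<i$. The delicate case is $\nu(i)=\nu(k+1)=w$, where carrying can make $\nu(k+1+i)$ far exceed $w$; here I would write $k+1=2^w(2K+1)$, note $\alpha(K)=\alpha(k)-w$, and weigh $\nu\bigl((2K+1)+i/2^w\bigr)$ against the size of $i$, using that a large valuation there forces $i$ to be a correspondingly large odd multiple of $2^w$.

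I expect this last case to be the main obstacle: balancing the growth of $\nu(k+1+i)$ against the linear term $i$ and the budget $2\alpha(k)+2$ is precisely where the constant $-2\alpha(k)-2$ is pinned down, and it is conceivable that for some $k$ no single term meets the bound, in which case the fallback is to group the offending indices and exhibit $2$-adic cancellation in $T$ rather than bounding term by term. Establishing either the clean termwise inequality or the requisite cancellation is the heart of the argument; the reductions producing the series for $T$ are, by contrast, routine.
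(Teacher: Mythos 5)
First, a point of context: the paper offers \emph{no} proof of this statement --- it is Conjecture \ref{conj1}, verified there only computationally for $e\le 15$, and the paper's partial results (via the symmetric-function expansion (\ref{sumj})) address only the weaker Conjecture \ref{conj2}. So your proposal is not an alternative to a proof in the paper but an attempt to supply one. The route you take is genuinely different from anything in the paper and its reductions are correct: the closed form $f(n)=\frac{n+1}{2^{n+1}}\sum_{j=1}^{n+1}2^j/j$, the fact that $\sum_{j\ge1}2^j/j=0$ in $\Q_2$, and the resulting identity $f(2^e+k)-f(k)=-2^e\sum_{i\ge1}\frac{i\,2^i}{(2^e+k+1+i)(k+1+i)}$ all check out. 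Since the valuation of a convergent sum is at least the minimum of the valuations of its terms, the conjecture would follow from the termwise bound. But that bound is exactly where your write-up stops: you say you ``would aim for'' it, concede that ``it is conceivable that for some $k$ no single term meets the bound,'' and give no argument in the case you yourself call delicate. As written this is a reduction, not a proof; the acknowledged heart of the argument is missing.

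Moreover your triage of the cases is partly wrong. The terms with $k+1+i\ge 2^e$ are not harmless because ``$2^i$ forces enormous valuation'': for $k=2^e-2$, $i=1$ the term is $2/(2^{e+1}\cdot 2^e)$, of valuation exactly $-2e=-2\alpha(k)-2$. These terms (those with $2^e\mid k+1+i$) are precisely where the constant is pinned down and where the paper's observed equality cases $k=2^e-2,\,2^e-4$ come from: for $k+1+i=2^e$, setting $q=i=2^e-k-1$ one has $\alpha(k)=e-\alpha(q)$ and the needed inequality becomes $q+\nu(q)+1\ge 2\alpha(q)$, true since $q\ge 2^{\alpha(q)}-1$, with equality exactly for $q\in\{1,3\}$. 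Conversely, the case you single out as the main obstacle ($k+1+i<2^e$ with $\nu(i)=\nu(k+1)=w$) closes with room to spare: writing $k+1=2^wu$, $i=2^wj$ with $u,j$ odd and $s=\nu(u+j)$, one has $\alpha(k)=w+\alpha(u-1)$, and since $u+j-1$ ends in $s$ binary ones, subadditivity of $\alpha$ gives $\alpha(u-1)\ge s-\alpha(j)$; the claim then reduces to $2^wj+w+2\ge 2\alpha(j)$, immediate from $j\ge 2^{\alpha(j)}-1$. With these estimates (plus the trivial cases $\nu(k+1+i)>e$ and $k+1+i=c2^e$, $c\ge3$ odd) your skeleton does appear to yield a complete proof of the conjecture --- a notable outcome --- but none of that is in the proposal, and the one regime you wave off as routine is the one that actually governs the bound.
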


Conjecture \ref{conj1} has been verified for $e\le15$. In this range, equality holds iff $k=2^e-4$ or $2^e-2$.
The following result describes the consequence of this conjecture for 2-definability.
\begin{prop}\label{cor1} Assume Conjecture \ref{conj1}. If the number of 0's minus the number of 1's in $x_j$ approaches $\infty$ as $j$ goes to $\infty$, then $f(x)$ is 2-definable.\end{prop}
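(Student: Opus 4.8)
The plan is to reduce $2$-definability to showing that the consecutive differences $f(x_{j+1})-f(x_j)$ tend to $0$ in $\Q_2$, exploiting the ultrametric fact that a sequence is Cauchy precisely when its consecutive differences tend to $0$. Concretely, if $c_j:=\nu(f(x_{j+1})-f(x_j))\to\infty$, then for $m>n$ the telescoping identity $f(x_m)-f(x_n)=\sum_{i=n}^{m-1}(f(x_{i+1})-f(x_i))$ together with the ultrametric inequality gives $\nu(f(x_m)-f(x_n))\ge\min_{n\le i<m}c_i$, which forces $\la f(x_j)\ra$ to be Cauchy. So the entire task is to bound $c_j$ from below and show the bound diverges.

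First I would relate $x_{j+1}$ to $x_j$. Since $x_{j+1}$ is the mod $2^{j+1}$ reduction of $x$, it agrees with $x_j$ in the low $j$ bits and can differ only in bit $j$ (the $2^j$ place). Hence either the $j$th binary digit of $x$ is $0$, so that $x_{j+1}=x_j$ and $f(x_{j+1})-f(x_j)=0$, or that digit is $1$, so that $x_{j+1}=x_j+2^j$. In the latter case $x_j<2^j$, so Conjecture \ref{conj1} applies with $e=j$ and $k=x_j$, yielding $c_j=\nu(f(2^j+x_j)-f(x_j))\ge j-2\a(x_j)-2$.

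Next I would match this estimate to the hypothesis. Viewing $x_j$ as a string of $j$ binary digits (padded with leading zeros), it has $\a(x_j)$ ones and $j-\a(x_j)$ zeros, so the number of $0$'s minus the number of $1$'s in $x_j$ is exactly $j-2\a(x_j)$. The hypothesis that this quantity approaches $\infty$ therefore reads $j-2\a(x_j)\to\infty$, whence $c_j\ge j-2\a(x_j)-2\to\infty$ as well; in the digit-$0$ case $c_j=+\infty$ and the bound is automatic. Combined with the first paragraph, $\la f(x_j)\ra$ is Cauchy, so $f(x)$ is $2$-definable.

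The deduction is short because the genuine difficulty is already packed into Conjecture \ref{conj1}; here the only points that require care are the translation of the ``zeros minus ones'' hypothesis into the inequality $j-2\a(x_j)\to\infty$ and the check that the conjecture's hypothesis $0\le k<2^e$ is met at each step (which holds since $x_j<2^j$). The main obstacle, such as it is, is purely bookkeeping: one must keep the two cases (digit $0$ versus digit $1$) straight and observe that divergence of $j-2\a(x_j)$ survives subtraction of the additive constant $2$.
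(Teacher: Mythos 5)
Your proposal is correct and follows essentially the same route as the paper: both reduce Cauchyness to the ultrametric criterion on consecutive differences, apply Conjecture \ref{conj1} with $e=j$ and $k=x_j$ at each position where the digit of $x$ is $1$, and translate the hypothesis into $j-2\a(x_j)\to\infty$. The only cosmetic difference is that the paper indexes by the distinct values of the sequence while you index by $j$ and handle the zero-digit case separately.
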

\noindent We include  leading 0's in $x_j$ here, since they will eventually be seen. An alternative statement is that $f(x)$ would be 2-definable if the fraction of 0's in $x$ is greater than 1/2.
\begin{proof}[Proof of Proposition \ref{cor1}] Let $\ds x=\sum_{i=1}^\infty 2^{e_i}$ with $e_i<e_{i+1}$. The $i$th distinct point in the sequence of $f(x_j)$'s is $f(2^{e_i}+x_{e_i})$,
and the $(i-1)$st distinct point is $f(x_{e_i})$. The distance between these points is $2^{-v}$, where $$v=\nu(f(2^{e_i}+x_{e_i})-f(x_{e_i}))\ge e_i-2\a(x_{e_i})-2,$$
according to Conjecture \ref{conj1}. The number of 0's in $x_{e_i}$ equals $e_i-\a(x_{e_i})$.
Our hypothesis  says that $e_i-2\a(x_{e_i})$ becomes arbitrarily large, and hence the distance between the $i$th and $(i-1)$st distinct points in the sequence
is $2^{-v}$ where $v$ becomes arbitrarily large. Thus our sequence is Cauchy.
\end{proof}

Although we have very strong evidence for Conjecture \ref{conj1}, we feel that we are more likely to be able to prove the following conjecture.
  \begin{conj}\label{conj2} If $0\le k<2^{e-1}$,  then
$$\nu(f(2^e+2k+1)-f(2k+1))\ge e-2\lg(k+3)+2\nu(k+1).$$
\end{conj}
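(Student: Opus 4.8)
The plan is to replace the finite sum defining $f$ by a $2$-adically convergent series in which continuity in $n$ is transparent. Recall the classical closed form for sums of reciprocal binomial coefficients, $f(n)=\frac{n+1}{2^{n+1}}\sum_{j=1}^{n+1}\frac{2^j}{j}$. Since $\nu(2^j/j)=j-\nu(j)\to\infty$, the series $\sum_{j\ge1}2^j/j$ converges in $\Q_2$, and a standard computation with the $2$-adic logarithm ($-\log(-1)=0$) shows that it equals $0$. Hence $\sum_{j=1}^{N}2^j/j=-\sum_{j>N}2^j/j$, and taking $N=n+1$ gives the representation
$$f(n)=-(n+1)\sum_{i\ge1}\frac{2^i}{n+1+i},$$
valid for every $n\in\N$ and manifestly continuous in the $2$-adic topology.

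Writing $m=2k+1$, $a=m+1=2(k+1)$ and $b=2^e+m+1=a+2^e$, I would combine the two series termwise:
$$f(2^e+m)-f(m)=\sum_{i\ge1}2^i\Bigl(\frac{a}{a+i}-\frac{b}{b+i}\Bigr)=-2^e\sum_{i\ge1}\frac{2^i\,i}{(a+i)(b+i)}.$$
Since $\nu(2^e)=e$, Conjecture \ref{conj2} reduces to showing $\nu(U)\ge 2\nu(k+1)-2\lg(k+3)$, where $U=\sum_{i\ge1}2^i i/((a+i)(b+i))$. The valuations of the individual terms tend to $+\infty$, so by the ultrametric inequality it suffices to prove the pointwise bound
$$v_i:=i+\nu(i)-\nu(a+i)-\nu(b+i)\ \ge\ 2\nu(k+1)-2\lg(k+3)\qquad(i\ge1).$$
The crucial feature is that this needs no control of cancellation in the sum.

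To estimate $v_i$ I would exploit $b+i=(a+i)+2^e$ together with $a=2(k+1)\le 2^e$. For $1\le i<2^e$ one has $0<a+i<2^{e+1}$, so $\nu(a+i)<e$ unless $a+i=2^e$; off this exceptional index, $\nu(b+i)=\nu(a+i)$ and hence $v_i=i+\nu(i)-2\nu(a+i)$. The exceptional term $i=2^e-a$ and the tail $i\ge2^e$ (where $a+i,b+i\le 3i$, forcing $v_i\ge i-2\log_2 i-O(1)$) are both easily seen to beat the bound. On the main range I would split according to whether $\nu(i)=\nu(a)=1+\nu(k+1)$: when $\nu(i)\ne\nu(a)$ one has $\nu(a+i)=\min(\nu(i),\nu(a))$ and $v_i$ is comfortably positive, so the whole difficulty concentrates in the resonant case $\nu(i)=\nu(a)$.

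The hard part is exactly this resonant case. Writing $r=\nu(k+1)$, $i=2^{r+1}i'$ and $a=2^{r+1}a''$ with $i',a''$ odd, one gets $\nu(a+i)=r+1+\nu(a''+i')$ and $v_i=i-1-r-2\nu(a''+i')$, so a large cancellation $\nu(a''+i')$ threatens the inequality. The mechanism that rescues it is that $2^{\nu(a''+i')}\le a''+i'$ forces either $i'$ to be large—whence $i=2^{r+1}i'$ is large and $v_i$ wins outright—or $a''$ to be large, in which case $k+1=2^r a''$ makes $\lg(k+3)$ large and the term $-2\lg(k+3)$ on the right absorbs the loss. Making this dichotomy quantitative, with the smallest indices (e.g. $i=2$, where the estimate comes down to the elementary $\lg(k+3)\ge\nu(k+2)$) and the exceptional index $i=2^e-a$ checked directly, is where essentially all the work lies; the precise constants—the shift to $k+3$ and the factor $2$ in $-2\lg(k+3)$—appear to be calibrated exactly so these boundary inequalities hold, which is both the reason to expect the conjecture and the reason the bookkeeping must be handled with care.
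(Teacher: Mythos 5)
Your route is genuinely different from the paper's --- and it has to be, because the paper does not prove this statement: Conjecture \ref{conj2} is left open there. The paper's strategy (Proposition \ref{symm}) splits the sum by binomial symmetry into the ranges $i\le k$ and $k<i\le 2^{e-1}+k$, proves the first half (Theorem \ref{thm}) by expanding $\binom{2^e+a}{b}^{-1}-\binom ab^{-1}$ in elementary symmetric functions of reciprocals, and explicitly reports failure on the second half. You instead invoke the classical identity $f(n)=\frac{n+1}{2^{n+1}}\sum_{j=1}^{n+1}\frac{2^j}{j}$ together with $\sum_{j\ge1}\frac{2^j}{j}=-\log(-1)=0$ in $\Q_2$ to rewrite $f(n)=-(n+1)\sum_{i\ge1}\frac{2^i}{n+1+i}$, whence
$$f(2^e+m)-f(m)=-2^e\sum_{i\ge1}\frac{2^i\,i}{(a+i)(b+i)},\qquad a=2(k+1),\ b=a+2^e.$$
This is a real structural gain: the conjecture reduces to a termwise valuation bound with no cancellation to control. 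I checked each reduction (the closed form, the vanishing of the logarithmic series, the telescoping, and the ultrametric step); all are valid. Your key observation that $\nu(a+i)=\nu(b+i)$ for every $1\le i<2^e$ except the single index $i=2^e-a$ is correct, and the resulting termwise bound $v_i\ge 2\nu(k+1)-2\lg(k+3)$ is consistent with the paper's numerics: it is attained exactly at $i=2$ when $k=2^{e-1}-2$, which is precisely the reported equality case of the conjecture.

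Where the writeup stops short is the resonant case $\nu(i)=\nu(a)$, which you correctly identify as carrying all the difficulty but do not finish. It does close: writing $k+1=2^ra''$ and $i=2^{r+1}i'$ with $a'',i'$ odd, the needed inequality is $2^{r+1}i'-r-1-2\nu(a''+i')\ge 2r-2\lg(k+3)$; if $\nu(a''+i')=N$ and $i'<2^{N-1}$ then $a''\ge 2^N-i'$ forces $\lg(k+3)\ge\lg(k+1)\ge r+N-1$, reducing the claim to $2^{r+1}i'\ge r+3$, whose only failure $(r,i')=(0,1)$ is exactly your $i=2$ boundary case, rescued by $\lg(k+3)\ge\nu(k+2)$; if $i'\ge 2^{N-1}$ then $i\ge 2^{r+N}$ swamps everything. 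The exceptional index $i=2^e-a$ (where $v_i=2^e-2(k+1)+\nu(k+1)-2e$ and the worst case $k=2^{e-1}-2$ gives exact equality) and the tail $i\ge 2^e$ are routine. So this is not merely a different decomposition of the same proof: carried to completion, your argument establishes a statement the paper could only conjecture, and it does so while bypassing the symmetric-function machinery entirely. You should write out the resonant-case dichotomy in full, since that is the step on which the whole claim rests.
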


Conjecture \ref{conj2} has been verified for $e\le15$. In this range, equality holds iff $k=2^{e-1}-2$.
The following result describes the consequence of this conjecture for 2-definability.
\begin{prop}\label{cor2} Assume Conjecture \ref{conj2}. Suppose $x=\sum 2^{e_i}$ has $e_1=0$ and $e_i<e_{i+1}$ and  satisfies $\ds\lim_{i\to\infty}(e_{i+1}-2e_i)=\infty$. Then $f(x)$ is 2-definable.\end{prop}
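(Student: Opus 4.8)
The plan is to follow the proof of Proposition \ref{cor1} almost verbatim, replacing the input estimate of Conjecture \ref{conj1} by that of Conjecture \ref{conj2}. Write $x=\sum_{i=1}^\infty 2^{e_i}$ with $e_1=0$ and $e_i<e_{i+1}$. Exactly as in Proposition \ref{cor1}, the distinct values taken by the sequence $\langle f(x_j)\rangle$ are the numbers $f(x_{e_i})=f\big(\sum_{m=1}^{i-1}2^{e_m}\big)$, and the point immediately following $f(x_{e_i})$ is $f(2^{e_i}+x_{e_i})$. Since $\Q_2$ is an ultrametric space, $\langle f(x_j)\rangle$ is Cauchy as soon as the distances between consecutive distinct points tend to $0$, so it suffices to prove that $v_i:=\nu(f(2^{e_i}+x_{e_i})-f(x_{e_i}))\to\infty$ as $i\to\infty$.

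The role of the hypothesis $e_1=0$ is that every reduction $x_{e_i}$ is odd, so it can be written as $2k+1$ with $k=\sum_{m=2}^{i-1}2^{e_m-1}$ (an empty sum, hence $k=0$, when $i=2$). Because $2k+1=x_{e_i}<2^{e_i}$ we have $0\le k<2^{e_i-1}$, so Conjecture \ref{conj2} applies with $e=e_i$ and gives
$$v_i\ \ge\ e_i-2\lg(k+3)+2\nu(k+1)\ \ge\ e_i-2\lg(k+3),$$
where the second inequality simply drops the nonnegative term $2\nu(k+1)$.

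It remains to bound $\lg(k+3)$ in terms of the exponents $e_m$. The top bit of $k=\sum_{m=2}^{i-1}2^{e_m-1}$ lies in position $e_{i-1}-1$, so $k<2^{e_{i-1}}$, and hence $k+3<2^{e_{i-1}+1}$ once $e_{i-1}\ge 2$; this yields $\lg(k+3)\le e_{i-1}$ for all large $i$. Therefore $v_i\ge e_i-2e_{i-1}$ for large $i$. After the index shift $i\mapsto i-1$, the hypothesis $\lim_{i\to\infty}(e_{i+1}-2e_i)=\infty$ says precisely that $e_i-2e_{i-1}\to\infty$, so $v_i\to\infty$ and the sequence is Cauchy, as required. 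The argument is entirely routine once Conjecture \ref{conj2} is granted; the only points that need care are the estimate $\lg(k+3)\le e_{i-1}$ and the matching of the resulting bound $e_i-2e_{i-1}$ to the stated growth hypothesis through the index shift, so I do not expect any serious obstacle here.
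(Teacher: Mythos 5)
Your proposal is correct and follows essentially the same route as the paper: apply Conjecture \ref{conj2} to consecutive distinct terms $f(x_{e_i})$ and $f(2^{e_i}+x_{e_i})$, drop the nonnegative $2\nu(k+1)$, bound $\lg(k+3)$ by (roughly) $e_{i-1}$ since $x_{e_i}<2^{e_{i-1}+1}$, and invoke the hypothesis $e_{i+1}-2e_i\to\infty$. Your write-up is just a more detailed version of the paper's two-line argument (the paper settles for the slightly weaker bound $e_i-2e_{i-1}-2$, which makes no difference).
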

Note that this would be exponentially stronger than the result proved in \cite{D} and referenced above, but still much weaker than the conclusion of Proposition \ref{cor1}.
\begin{proof}[Proof of Proposition \ref{cor2}] Arguing similarly to the previous proof, the distance between consecutive points in the sequence is $2^{-v}$ with
 $$v=\nu(f(2^{e_i}+x_{e_i})-f(x_{e_i}))\ge e_i-2\lg(x_{e_i}+3)\ge e_i-2e_{i-1}-2$$
 according to Conjecture \ref{conj2}. Since our assumption is that $v$ becomes arbitrarily large, the sequence is Cauchy.
  \end{proof}

\section{Steps toward a proof of Conjecture \ref{conj2}}\label{pfsec}
In this section, we outline a program which we hope might lead to a proof of Conjecture \ref{conj2}.
Using symmetry of binomial coefficients, the following result is immediate.
\begin{prop} \label{symm}Let $0\le k<2^{e-1}$. If the following two statements are true, then so is Conjecture \ref{conj2}.
\begin{enumerate}\item[i.] ${\ds\nu\bigl(\sum_{i=0}^k} \bigl(\binom{2^e+2k+1}i^{-1}-\binom{2k+1}i^{-1}\bigr)\bigr)\ge e-2\lg(k+2)+2\nu(k+1)$,
\item[ii.] ${\ds\nu\bigl(\sum_{i=k+1}^{2^{e-1}+k}}\binom{2^e+2k+1}i^{-1}\bigr)\ge e-2\lg(k+3)+2\nu(k+1)-1.$
\end{enumerate}
\end{prop}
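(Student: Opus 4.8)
The plan is to reduce both $f$-values to sums over the lower half of their index ranges via the symmetry $\binom{n}{i}=\binom{n}{n-i}$, and then to match the two resulting partial sums against hypotheses (i) and (ii) term by term. First I would record that both arguments of $f$ are odd: writing $N=2^e+2k+1$ and $M=2k+1$ (note $e\ge1$, since $k<2^{e-1}$), each of $f(N)$ and $f(M)$ is a sum of an \emph{even} number of terms that pair off under $i\leftrightarrow N-i$ (resp.\ $i\leftrightarrow M-i$) with no fixed middle term. Using $(N-1)/2=2^{e-1}+k$ and $(M-1)/2=k$, this gives
$$f(N)=2\sum_{i=0}^{2^{e-1}+k}\binom{N}{i}^{-1},\qquad f(M)=2\sum_{i=0}^{k}\binom{M}{i}^{-1}.$$

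Next I would subtract these and split the $N$-sum at the index $k$, separating the range $0\le i\le k$, where both $\binom{N}{i}^{-1}$ and $\binom{M}{i}^{-1}$ occur, from the range $k+1\le i\le 2^{e-1}+k$, where only $\binom{N}{i}^{-1}$ occurs. This yields
$$f(N)-f(M)=2\bigl(S_1+S_2\bigr),\qquad S_1=\sum_{i=0}^{k}\Bigl(\binom{N}{i}^{-1}-\binom{M}{i}^{-1}\Bigr),\quad S_2=\sum_{i=k+1}^{2^{e-1}+k}\binom{N}{i}^{-1},$$
where $S_1$ is precisely the sum appearing in (i) and $S_2$ is precisely the sum appearing in (ii).

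Finally I would apply $\nu$, using $\nu(2x)=1+\nu(x)$ together with the ultrametric inequality, to obtain $\nu(f(N)-f(M))\ge 1+\min(\nu(S_1),\nu(S_2))$. Hypothesis (ii) gives $\nu(S_2)\ge e-2\lg(k+3)+2\nu(k+1)-1$, so $1+\nu(S_2)$ already meets the bound of Conjecture \ref{conj2}. For the other term, hypothesis (i) gives $\nu(S_1)\ge e-2\lg(k+2)+2\nu(k+1)$; since $\lg$ is nondecreasing, $\lg(k+2)\le\lg(k+3)$, so $1+\nu(S_1)$ exceeds the same bound. Taking the minimum yields exactly Conjecture \ref{conj2}.

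As the word ``immediate'' indicates, this reduction carries no genuine obstacle; the only points demanding care are the parity bookkeeping that legitimizes the folding (both $N$ and $M$ must be odd, and the half-range endpoint must be exactly $2^{e-1}+k$) and the harmless replacement of $\lg(k+2)$ by $\lg(k+3)$ that reconciles (i) with the target. The real difficulty lies entirely downstream, in establishing the two estimates (i) and (ii), to which the rest of the program must be devoted.
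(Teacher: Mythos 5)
Your proposal is correct and is exactly the symmetry argument the paper has in mind when it calls the proposition ``immediate'': fold each of $f(2^e+2k+1)$ and $f(2k+1)$ in half using $\binom{n}{i}=\binom{n}{n-i}$ (both arguments being odd, so the terms pair with no fixed middle term), split at $i=k$, and take $\nu$ of twice the sum $S_1+S_2$, with $\lg(k+2)\le\lg(k+3)$ reconciling the two bounds. Nothing further is needed.
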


Our main result is
\begin{thm}\label{thm} Let $0\le k<2^{e-1}$. Then statement i.~of Proposition \ref{symm} is true. Indeed, with
$$T_i:=\tbinom{2^e+2k+1}i^{-1}-\tbinom{2k+1}i^{-1},$$
we have
\begin{enumerate}
\item[a.] if $0\le i\le [(k-1)/2]$, then
$$\nu(T_{2i}+T_{2i+1})\ge e-2\lg(k+1)+2\nu(k+1),\quad{\text and}$$
\item[b.] if $k$ is even, then
$$\nu(T_k)\ge e-2\lg(k+2).$$
\end{enumerate}
\end{thm}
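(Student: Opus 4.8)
The plan is to replace each difference of inverse binomial coefficients by an \emph{exact} finite sum via partial fractions, thereby reducing the theorem to a termwise $2$-adic valuation inequality. Throughout write $m=2k+1$ and $n=2^e+m$, and note that $\nu(m-t)<e$ for every $t$ in the ranges below, so that $\nu(m+2^e-t)=\nu(m-t)$.

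For part (a), I would first pair the two terms using the identity $\binom Nj^{-1}+\binom N{j+1}^{-1}=\frac{N+1}{(j+1)\binom N{j+1}}$ (with $j=2i$ and $N=n,m$), which gives $T_{2i}+T_{2i+1}=\frac1{2i+1}\bigl(\frac{n+1}{\binom n{2i+1}}-\frac{m+1}{\binom m{2i+1}}\bigr)$. Writing $\frac{N+1}{\binom N{2i+1}}=(2i+1)!\,\frac{N+1}{\prod_{t=0}^{2i}(N-t)}$, expanding $\frac{N+1}{\prod_{t=0}^{2i}(N-t)}=\sum_{t=0}^{2i}\frac{(-1)^t(t+1)\binom{2i}t}{(2i)!\,(N-t)}$ in partial fractions, and then subtracting the values at $N=n$ and $N=m$ (which differ by $2^e$), I obtain the exact identity
$$T_{2i}+T_{2i+1}=-2^e\sum_{t=0}^{2i}\frac{(-1)^t(t+1)\binom{2i}t}{(m-t)(m+2^e-t)}.$$
Hence $\nu(T_{2i}+T_{2i+1})=e+\nu(\text{sum})\ge e+\min_t\bigl(\nu(t+1)+\nu\tbinom{2i}t-2\nu(m-t)\bigr)$, and part (a) follows once I establish
$$2\nu(m-t)-\nu(t+1)-\nu\tbinom{2i}t\le 2\lg(k+1)-2\nu(k+1)\qquad(0\le t\le 2i).$$

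Even values of $t$ are immediate, since then $m-t$ is odd and the left side is $\le0\le 2\lg(k+1)-2\nu(k+1)$. For odd $t=2u+1$ (so $0\le u\le i-1$) one has $m-t=2(k-u)$ and $t+1=2(u+1)$, and the inequality becomes
$$1+2\nu(k-u)-\nu(u+1)-\nu\tbinom{2i}{2u+1}\le 2\lg(k+1)-2\nu(k+1).$$
This is the crux: a large value of $\nu(k-u)$ must be paid for by the binomial valuation $\nu\binom{2i}{2u+1}$ together with $2\lg(k+1)$. I would prove it via Kummer's theorem, tracking the binary carries underlying all four valuations simultaneously; the constraint $u\le i-1$ confines $k-u$ to the upper part of $[1,k]$, which caps $\nu(k-u)$ near $\lg(k+1)$ and is precisely what forces the bound (and makes it tight, for instance near $k=2^{e-1}-2$).

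Part (b) is the unpaired leftover and is handled identically: partial fractions give $T_k=-2^e k\sum_{l=0}^{k-1}\frac{(-1)^{k-1-l}\binom{k-1}l}{(m-l)(m+2^e-l)}$, and since $k$ is even ($\nu(k)\ge1$) the claim reduces to $2\nu(m-l)-\nu\binom{k-1}l\le 2\lg(k+2)+\nu(k)$ for $0\le l\le k-1$. In both parts the algebraic reduction is exact and essentially forced, so no infinite expansions or convergence estimates intervene; all the arithmetic content — and the very appearance of $\nu(k+1)$ and the $\lg$ terms — resides in these termwise carry inequalities. That delicate case analysis of interlocking binary carry counts is the step I expect to be the main obstacle.
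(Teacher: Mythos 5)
Your algebraic reduction is correct and takes a genuinely different route from the paper. You pair $T_{2i}+T_{2i+1}$ via $\binom Nj^{-1}+\binom N{j+1}^{-1}=\frac{N+1}{(j+1)\binom N{j+1}}$ and then use an \emph{exact} partial-fraction expansion to write the difference as $-2^e\sum_t\frac{(-1)^t(t+1)\binom{2i}t}{(m-t)(m+2^e-t)}$, after which you bound the valuation termwise (using, correctly, that $\nu(m+2^e-t)=\nu(m-t)$ because $0<m-t<2^e$). The paper instead expands $\binom{2^e+a}b^{-1}-\binom ab^{-1}$ as $-\binom{2^e+a}b^{-1}\sum_{j\ge1}2^{je}\sigma_j(\frac1a,\ldots,\frac1{a-b+1})$, justifies discarding all terms with $j>1$, and then computes the $2$-exponent of the surviving harmonic-type sum \emph{exactly} by locating the unique power of $2$ among the denominators. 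Your version trades that ``identify the dominant term'' step for a minimum over finitely many explicit terms, which is cleaner structurally.

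The genuine gap is that the termwise carry inequalities --- which you yourself flag as ``the crux'' and ``the main obstacle'' --- are asserted rather than proved, and they carry all of the arithmetic content (in particular the appearance of $\nu(k+1)$). As submitted this is a reduction, not a proof. Fortunately the gap is fillable: for part (a), with $t=2u+1$ odd, the required bound
$$1+2\nu(k-u)-\nu(u+1)-\nu\tbinom{2i}{2u+1}\le 2\lg(k+1)-2\nu(k+1)$$
follows from three observations: (i) $\nu(u+1)\ge\min(\nu(k-u),\nu(k+1))$, with equality when the two valuations differ, since $u+1=(k+1)-(k-u)$; (ii) $\nu\binom{2i}{2u+1}\ge\nu(u+1)+1$, because $2u+1\equiv-1\bmod 2^{\nu(u+1)+1}$, so its lowest $\nu(u+1)+1$ bits are all $1$ and adding the odd number $2i-2u-1$ forces a carry in each of those positions; (iii) $\nu(k-u)\le\lg(k)\le\lg(k+1)$. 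These give $\mathrm{LHS}\le 2(\nu(k-u)-\nu(u+1))$, which is $\le0$ when $\nu(k-u)\le\nu(k+1)$ and $\le2(\lg(k+1)-\nu(k+1))$ otherwise. A similar but slightly more delicate carry analysis (one must rule out the boundary case $k=2^{j}-2$, where $\lg(k+2)$ jumps) closes part (b). So the route works, but you must actually supply this step before the argument counts as a proof.
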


Our proof will  use  the standard results that $\nu\binom{m+n}m=\a(m)+\a(n)-\a(m+n)$, and that $\nu\binom{m+n}m$ equals the number of carries when
$m$ and $n$ are added in binary arithmetic. It follows from this that
\begin{equation}\nu\tbinom ki\le\lg(k+1)-\nu(k+1),\label{carries}\end{equation}
since, if $\nu(k+1)=t$, then there cannot be any carries in the last $t$ positions in the binary addition of $i$ and $k-i$.

\begin{proof}[Proof of part b of Theorem \ref{thm}] We first note that
\begin{equation}\label{sumj}\tbinom{2^e+a}b^{-1}-\tbinom ab^{-1}=-\tbinom{2^e+a}b^{-1}\sum_{j\ge1}2^{je}\sigma_j(\tfrac1a,\ldots,\tfrac1{a-b+1}),\end{equation}
where $\sigma_j(-)$ denotes an elementary symmetric function.

Let $k=2\ell$.
Including only the $(j=1)$-term, which we will justify, (\ref{sumj}) yields that $T_{2\ell}$ has the same 2-exponent as
\begin{equation}\label{expr}2^e\tbinom{2^e+4\ell+1}{2\ell}^{-1}\bigl(\tfrac1{2\ell+2}+\cdots+\tfrac1{4\ell+1}\bigr).\end{equation}
Note that $2\ell+2\le2^t\le4\ell+1$ iff $2^{t-2}\le\ell\le2^{t-1}-1$, and so $\nu(\frac1{2\ell+2}+\cdots+\frac1{4\ell+1})=-\lg(\ell)-2$.
Thus the 2-exponent of (\ref{expr}) equals $e-\a(\ell)-\lg(\ell)-2\ge e-2\lg(2\ell+2)$,  as claimed. Here we use that  $2\lg(\ell+1)\ge\a(\ell)+\lg(\ell)$, which is proved by considering separately
$2^t\le \ell<2^{t+1}-1$ and $\ell=2^{t+1}-1$.

Now we justify including only the term with $j=1$ in the above sum. Let
$$v_j=\nu(2^{je}\sigma_j(\tfrac1{2\ell+2},\ldots,\tfrac1{4\ell+1})).$$
If $\nu(\sigma_1(-))=-t$, then $v_1=e-t>0$, and if $j>1$ then $v_j> j(e-t)>v_1$,
since $\sigma_j(-)$ is a sum of products of $j$ factors, each with 2-exponent $\ge-t$, and at most one equal to $-t$.
\end{proof}
\begin{proof}[Proof of part a of Theorem \ref{thm}]
Including only the $(j=1)$-term of (\ref{sumj}), which again will be justified, we obtain that $T_{2i}+T_{2i+1}$ equals
\begin{equation}\label{ii+1}-2^e\tbinom{2^e+2k+1}{2i}^{-1}\bigl(\bigl(\tfrac1{2k+1}+\cdots+\tfrac1{2k-2i+2}\bigr)\bigr(1+\tfrac{2i+1}{2^e+2k-2i+1}\bigr)
+\tfrac{2i+1}{(2^e+2k-2i+1)(2k-2i+1)}\bigr).\end{equation}
Thus, using (\ref{carries}) at the second step,
\begin{eqnarray*}\nu(T_{2i}+T_{2i+1})&\ge& e-\nu\tbinom ki+\min(-\lg(2k)+\nu(2^e+2k+2),0)\\
&\ge&\min(e+2\nu(k+1)-\lg(k+1)-\lg(k),e-\lg(k+1)+\nu(k+1)),\end{eqnarray*}
which is as claimed.

We complete the proof by showing that if $j>1$, then using the $j$-term of the sum in (\ref{sumj}) in $T_{2i}+T_{2i+1}$ would give an expression with  2-exponent
at least as large as was obtained with $j=1$. Analogous to part of (\ref{ii+1}), the $j$-term would be, up to odd multiples, 
\begin{equation}2^{je}((2^e+2k+2)\sigma_j(-)+\sigma_{j-1}(-)).\label{j}\end{equation}
If $\nu(\sigma_1(-))=-t$, then $\nu(\sigma_j(-))>-jt$. Since $e>t$ and $e>\nu(2k+2)$, the claim when $k<2^{e-1}-1$ follows from
$$je+\nu(2k+2)-jt>e+\nu(2k+2)-t$$
and $$je-(j-1)t> e+\nu(2k+2)-t.$$
If $k=2^{e-1}-1$, then $t=e-1$ and (\ref{j}) has 2-exponent $e$ if $j=1$ (from $\sigma_0(-)$) and a larger value if $j>1$.
\end{proof}

Despite much effort, we have been unable to prove statement ii.~of Proposition \ref{symm}. Note that the application to 2-definability given in Proposition \ref{cor2}
would be true even if  Conjecture \ref{conj2} or Proposition \ref{symm} did not contain the ``$+2\nu(k+1)$.'' 

\def\line{\rule{.6in}{.6pt}}

\end{document}